\theoremstyle{plain}
\newtheorem{theorem}{Theorem}[section]		
\newtheorem{lemma}[theorem]{Lemma}
\newtheorem{proposition}[theorem]{Proposition}
\newtheorem{conjecture}[theorem]{Conjecture}
\theoremstyle{remark}
\def\CC{\mathcal{C}}
\newcommand{\eps}{\ensuremath{\varepsilon}}
\def\E{\mathbb{E}}
\def\N{\mathbb{N}}
\def\CS{\mathcal{S}}
\def\CL{\mathcal{L}}
\def\CD{\mathcal{D}}
\def\SS{\mathscr{S}}
\let\originalleft\left
\let\originalright\right
\renewcommand{\left}{\mathopen{}\mathclose\bgroup\originalleft}
\renewcommand{\right}{\aftergroup\egroup\originalright}
\def\imod#1{\allowbreak\mkern10mu({\operator@font mod}\,\,#1)}
\begin{document}
\title{A universal exponent for homeomorphs}
\author{Peter Keevash}
\address{Mathematical Institute, University of Oxford, Andrew Wiles Building, Radcliffe Observatory Quarter, Woodstock Road, Oxford OX2\thinspace6GG, UK}
\email{keevash@maths.ox.ac.uk}

\author{Jason Long}
\address{Mathematical Institute, University of Oxford, Andrew Wiles Building, Radcliffe Observatory Quarter, Woodstock Road, Oxford OX2\thinspace6GG, UK}
\email{jlong@maths.ox.ac.uk}

\author{Bhargav Narayanan}
\address{Department of Mathematics, Rutgers University, Piscataway, NJ 08854, USA}
\email{narayanan@math.rutgers.edu}

\author{Alex Scott}
\address{Mathematical Institute, University of Oxford, Andrew Wiles Building, Radcliffe Observatory Quarter, Woodstock Road, Oxford OX2\thinspace6GG, UK}
\email{scott@maths.ox.ac.uk}

\date{24 March, 2020}
\subjclass[2010]{Primary 05E45; Secondary 05C65, 05C35}

\maketitle
\begin{abstract}
We prove a uniform bound on the topological Tur\'an number of an arbitrary two-dimensional simplicial complex $\SS$: any $n$-vertex two-dimensional complex with at least $C_\SS n^{3-1/5}$ facets contains a homeomorphic copy of $\SS$, where $C_\SS > 0$ is an absolute constant depending on $\SS$ alone. This result, a two-dimensional analogue of a classical result of Mader for one-dimensional complexes, sheds some light on an old problem of Linial from 2006.
\end{abstract}

\section{Introduction}
A number of natural extremal geometric problems arise when we view an $r$-uniform hypergraph as an $(r-1)$-dimensional simplicial complex (by identifying edges with facets). Questions of this nature arise in the high-dimensional combinatorics programme of Linial~\citep{linial1, linial2}, and have also been raised by Gowers~\citep{tim2}; for a sample of some  recent results in this programme, see~\citep{spheres, threshold, linial3, linial4}. In this paper, we study the Tur\'an problem for 2-complexes, or equivalently, the topological Tur\'an problem for 3-graphs.

In the Tur\'an theory of 3-graphs (see~\citep{turan1, turan2}), one is concerned with finding a copy of a fixed 3-graph as a \emph{subgraph}; in the context of 2-complexes, the appropriate replacement for the notion of a subgraph is that of a \emph{homeomorphic image}. More formally, we say that two 3-graphs $G$ and $H$ are \emph{homeomorphic} if they are homeomorphic as topological spaces (when viewed as 2-complexes), and we say that \emph{$G$ contains a homeomorph of $H$} if there is a subgraph of $G$ homeomorphic to $H$. The following example may help clarify this point of view: a 3-graph $H$ is a homeomorph of the complete 3-graph $K^3_4$ on four vertices (or equivalently, the two-dimensional sphere $S^2$) if we can place the vertices of $H$ on the sphere and then triangulate the sphere using those vertices in such a way that the resulting triangles are precisely the edges of $H$.  In this language, our main contribution is the following theorem.

\begin{theorem}\label{thm:main}
For each $3$-graph $H$, there exists $C_H>0$ such that any $3$-graph $G$ on $n$ vertices with at least $C_H n^{3-1/5}$ edges contains a homeomorph of $H$.
\end{theorem}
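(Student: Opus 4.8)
The plan is to exhibit in $G$ a ``standard subdivision'' of $H$ and to locate it inside a highly quasirandom sub-$3$-graph. First I would $3$-colour $V(G)$ at random and discard all non-rainbow edges, so that (losing only a constant factor in $e(G)$) $G$ becomes tripartite with every edge meeting all three parts. Then associate to $H$ the \emph{standard subdivision} $\Phi_H$: one branch vertex per vertex of $H$; a single midpoint subdividing each pair of the shadow of $H$; and, for each facet $F=\{a,b,c\}$, a hub $h_F$ together with the six triangles of the wheel over the hexagonal rim $a,m_{ab},b,m_{bc},c,m_{ca}$. Gluing the wheels along the subdivided edges recovers $H$ up to homeomorphism, so any copy of $\Phi_H$ in $G$ in which every wheel has all six of its triangles present and all introduced vertices are distinct is a sub-$3$-graph of $G$ homeomorphic to $H$. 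Two features of $\Phi_H$ are essential: it is tripartite (parts: hubs / branch vertices / midpoints), every triangle rainbow; and it is \emph{balanced} --- writing $f=|E(H)|$ we have $|E(\Phi_H)|=6f$ and $f\le|V(\Phi_H)|\le 7f$, so the edge-to-vertex ratio lies between $6/7$ and $6$ for \emph{every} $H$. It is this uniform balancedness that makes a universal exponent conceivable at all.

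The heart of the argument is to extract from $G$ a tripartite sub-$3$-graph $G'$, on three parts of common size $m\to\infty$, that is quasirandom of density $\rho$ in a bounded-order sense --- roughly, the codegrees of all configurations of bounded size behave as in a random tripartite $3$-graph of density $\rho$, so that six ``generic'' link sets meet in about $\rho^6 m$ vertices --- and, crucially, with $\rho\ge m^{-c_0}$ for an absolute constant $c_0<1/6$. This is exactly where the hypothesis $e(G)\ge C_H n^{3-1/5}$ enters, and I expect this step to be the main obstacle. One would run a dependent-random-choice / Erd\H{o}s-box style selection --- plausibly in two stages, first a cleaning to make the shadow dense and the codegrees regular, then a box-type pass --- engineered to lose only an \emph{$H$-independent} amount in the exponent while keeping the output density above the threshold $m^{-1/6}$. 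Making these two competing requirements balance at precisely $3-1/5$ is the delicate point; everything else is comparatively soft.

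Finally I would embed $\Phi_H$ into $G'$ greedily: place the branch vertices, then the midpoints, then the hubs, into their respective parts, each time picking a vertex distinct from all those already placed that forms the $O(1)$ triangles required of it with the already-placed vertices. For the hub step a union bound is useless --- each hub must form all six of its triangles at once, and six times a vanishing codegree density exceeds $1$ --- so here quasirandomness does the work: a fresh vertex serves as a given hub with probability about $\rho^6$, and since only $O(f)$ vertices of the hub part have been used, about $\rho^6 m-O(f)=m^{1-6c_0}-O(f)>0$ choices remain because $c_0<1/6$. The same estimates (with a single $c_0$ good for all $H$, by balancedness) carry all $O(f)$ steps through, and the resulting copy of $\Phi_H$ is the sought homeomorph of $H$.
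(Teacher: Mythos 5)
There is a genuine gap, and in fact the plan as stated cannot work at the exponent $3-1/5$. The entire difficulty is concentrated in the step you yourself flag as ``the main obstacle'': extracting a sub-$3$-graph on parts of size $m$ that is quasirandom to sixth order with relative density $\rho\ge m^{-c_0}$ for an absolute $c_0<1/6$. No such extraction is possible in general. Take $G$ to be a random tripartite $3$-graph of density $p=\Theta(n^{-1/5})$; it satisfies the hypothesis of the theorem, but with high probability it contains no box of any relevant size whose relative density noticeably exceeds $p$, and $p=n^{-1/5}$ is \emph{below} your threshold $m^{-1/6}$. Worse, your target configuration itself is too edge-rich: for $H=K_t^{(3)}$ with $t$ large, $\Phi_H$ has $e=6\binom{t}{3}$ edges but only $v=t+\binom{t}{2}+\binom{t}{3}$ vertices, so the expected number of copies in the random $3$-graph is at most $n^{v-e/5}=n^{t+\binom{t}{2}-\frac15\binom{t}{3}}\to 0$; hence with high probability $G$ contains \emph{no} copy of $\Phi_H$ at all, even though it must contain a homeomorph of $H$. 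The ``uniform balancedness'' you invoke is the wrong invariant: the edge-to-vertex ratio of $\Phi_H$ tends to $6$ for dense $H$, which is exactly why the one-hub-per-facet subdivision only survives down to density about $m^{-1/6}$, not $m^{-1/5}$. So any proof at exponent $3-1/5$ must aim at a vertex-richer homeomorph than the standard subdivision, and must replace six-wise quasirandomness by something weaker.

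For comparison, the paper's proof does precisely this. It subdivides each facet of $H$ into \emph{three} cones over $4$-cycles ($12$ triangles and $3$ fresh apex vertices per facet), so the densest piece of the target has only about $3$ edges per vertex and survives down to density roughly $m^{-1/3}$. The embedding is then two-layered: a dependent-random-choice step selects one vertex $z$ whose link graph $\CL_z$ is dense and has few ``forbidden'' $4$-cycles (those bounding at most $K_H$ cones), a second such step cleans $Y$ so that almost all pairs and triples have large common neighbourhoods with few forbidden $4$-cycles through them, and then the auxiliary $2$-graph $\CS(H)$ is embedded into $\CL_z$ avoiding bad pairs/triples (via de Caen's Tur\'an bound) so that every special $4$-cycle lands on an ``admissible'' $4$-cycle; finally each such $4$-cycle is capped by a distinct apex chosen from its more than $K_H\ge 3e(H)$ available centres. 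The freedom of choosing apexes for admissible $4$-cycles is what substitutes for the strong codegree quasirandomness your hub step would require, and it is available at density $n^{-1/5}$, where your wheel gadget is not. If you want to salvage your approach, you would need to redesign the gadget so that each facet's triangulation introduces enough new vertices per new edge, and replace the quasirandomization lemma by a counting/cleaning argument of this admissible-versus-forbidden type.
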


Here, it is worth mentioning that the topological Tur\'an problem for $2$-graphs (i.e., graphs) is understood reasonably well: a classical result of Mader~\citep{mader1} asserts that for any graph $H$, there exists $C_H >0$ such that every $n$-vertex graph with at least $C_H n$ edges contains a homeomorph of $H$, and this is tight in general up to the multiplicative constant. Linial~\citep{linialques1, linialques2} has raised the question of an analogous result for $3$-graphs, and while answers are available for a few \emph{specific} $3$-graphs, no general results for $3$-graphs in the spirit of Mader's theorem appear to have been previously known; our main result, Theorem~\ref{thm:main}, fills in this gap.

We shall in fact prove Theorem~\ref{thm:main} with $C_H=2000v(H)^6$ for all sufficiently large $n \in \N$. However, we make no attempt to optimise this constant since we do not believe the exponent of $3-1/5$ in our result to be tight; instead, we expect the right exponent to be $3-1/2 = 5/2$, and make the following conjecture. 
\begin{conjecture}\label{mainconj}
For each $3$-graph $H$, there exists $C_H >0$ such that any $3$-graph $G$ on $n$ vertices with at least $C_H n^{5/2}$ edges contains a homeomorph of $H$.
\end{conjecture}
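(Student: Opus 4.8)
The plan is to push a natural two-phase strategy --- a \emph{structural} phase that passes to a sub-$3$-graph with favourable local density, and a \emph{building} phase that assembles the homeomorph inside it --- all the way down to the threshold $n^{5/2}$. For the structural phase, if $G$ has $C_H n^{5/2}$ edges then repeatedly deleting all edges through any vertex-pair whose current codegree has fallen below $\sqrt n$ destroys at most $n^2\cdot\sqrt n = n^{5/2}$ edges in total, so for $C_H$ large we retain a sub-$3$-graph $G'$ with $\gtrsim n^{5/2}$ edges in which \emph{every} pair carried by an edge has codegree $\gtrsim\sqrt n$. This is the sense in which $5/2 = 3-1/2$ matches codegree $n^{1/2}$: one expects $n^{3-\al}$ edges to force codegree $\sim n^{1-\al}$. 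A standard dichotomy --- large codegrees make the disk-routing below essentially free --- should reduce us to the hard regime, in which $G'$ behaves like a uniformly sparse $3$-graph of density $\sim n^{-1/2}$ with all codegrees $\Theta(\sqrt n)$; this is the regime that the conjectured extremal constructions inhabit. Here one should in fact ask the cleaning to deliver a $G'$ that is additionally pseudorandom, so that its codegree bound survives the later removal of a sublinear vertex set --- a refinement needed below.

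For the building phase, recall from the introduction that a homeomorph of $H$ inside $G'$ amounts to an injection $\phi\colon V(H)\to V(G')$ together with, for each triangle $abc$ of $H$, a triangulated disk in $G'$ with corners $\phi(a),\phi(b),\phi(c)$, where disks sharing a side of $H$ agree along a common boundary path and are otherwise internally disjoint. (Replacing $H$ by a simplicial subdivision changes neither its homeomorphism type nor the value of $C_H$, so we may take $H$ as combinatorially convenient as we wish.) The scheme is: fix $\phi$ and set aside a large reservoir of untouched vertices; handle the edges of $H$ one at a time, committing a path of $G'$ between the two relevant images; and then fill each triangle of $H$ with a triangulated disk bounded by its three already-committed paths and using only reservoir vertices in its interior, so that disks belonging to different triangles of $H$ are internally disjoint. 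Since $H$ has only $O_H(1)$ triangles, the bookkeeping and disjointness here are not the difficulty.

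The difficulty is concentrated in the \textbf{disk-routing lemma} this scheme needs: in a (pseudorandom) $3$-graph in which every edge-pair has codegree $\gtrsim\sqrt n$, even after a sublinear set of vertices has been forbidden, any prescribed cycle built from three committed paths should bound a triangulated disk with all interior vertices permitted --- a two-dimensional analogue of a Menger-type linkage statement, and the step I expect to be the genuine obstacle. The tension is this: three edges of $G'$ each have $\gtrsim\sqrt n$ co-neighbours, but three sets of size $n^{1/2}$ need not intersect, so a disk cannot in general be closed off with a bounded number of interior vertices; yet a disk whose size grows with $n$ drains the reservoir and, worse, its interior vertices eat into exactly the co-neighbourhoods that later disks must use --- which is why one wants $G'$ pseudorandom, and why even then the routing is delicate. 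We expect the proof of Theorem~\ref{thm:main} to lose precisely at this point, which is what leaves it at the exponent $3-\tfrac15$ rather than $3-\tfrac12$. Proving Conjecture~\ref{mainconj} would amount to establishing this disk-routing lemma at the $\sqrt n$ codegree threshold --- plausibly by using expansion of $G'$ to build disks of a controlled ``nested-fan'' shape whose completion needs only local choices at each stage, while arguing that the handful of earlier disks cannot concentrate on the co-neighbourhoods in use --- and this is the step we do not know how to carry out; even the special case $H = K^3_4$, i.e.\ forcing a homeomorph of $S^2$ with $Cn^{5/2}$ edges, remains open.
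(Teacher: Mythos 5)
The statement you are addressing is Conjecture~\ref{mainconj}, which is posed as an \emph{open problem} in this paper: the authors prove only the weaker exponent $3-1/5$ (Theorem~\ref{thm:main}) and explicitly state that their method cannot reach $5/2$. Your proposal, by your own account, is also not a proof: everything is made to rest on a ``disk-routing lemma'' at codegree threshold $\sqrt n$ which you state you do not know how to prove. That is the genuine gap, and it is not a small one --- the codegree-cleaning step (deleting edges through pairs of codegree below $\sqrt n$ costs at most $n^{5/2}$ edges) is standard and fine, but the assertion that ``large codegrees make the disk-routing essentially free'' and the hoped-for ``nested-fan'' completion are exactly where all the difficulty lives, and nothing in the proposal constrains how the growing interiors of earlier disks interact with the $\sqrt n$-sized co-neighbourhoods needed later. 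The paper itself points out (Section~\ref{sec:conc}) that any proof of Conjecture~\ref{mainconj} must produce homeomorphs of size unbounded in $H$, which is consistent with your picture, but it proves nothing in that direction; its actual argument for Theorem~\ref{thm:main} is quite different from your scheme (dependent random choice, $H$-admissible $4$-cycles in a single link graph, and bounded-size gluing of $4$-disks along the auxiliary graph $\CS(H)$), and its stated bottleneck is the possible absence of octahedra at exponent $3-1/4$, not a failed disk-routing step.

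One concrete factual error should also be corrected: you claim that even the case $H=K^3_4$, i.e.\ forcing a homeomorph of $S^2$ with $Cn^{5/2}$ edges, remains open. It does not --- this is the classical theorem of Brown, Erd\H{o}s and S\'os~\citep{bes}, which shows $\Theta(n^{5/2})$ is the correct threshold for the sphere and is precisely the motivation the paper gives for the exponent $5/2$ in Conjecture~\ref{mainconj}. The genuinely open specialisation highlighted by Linial and by this paper is the torus, for which the best known exponent is $3-1/3$ via an adaptation of the double-pyramid argument. So the honest summary is: your outline is a reasonable heuristic frame for why $5/2$ should be the truth, but it proves nothing beyond what is already known, and its central lemma is missing.
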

This conjectural exponent of $5/2$ requires explanation, and this brings us to the starting point of the line of investigation we pursue in this paper. In the specific case of the tetrahedron $K_4^3$, a classical result of Brown, Erd\H{o}s and S\'os~\citep{bes} says that $5/2$ is indeed the correct exponent: the minimum number of edges guaranteeing a homeomorph of the sphere in an $n$-vertex $3$-graph is $\Theta(n^{5/2})$. Conjecture~\ref{mainconj} is then motivated by the following line of reasoning: it turns out that we may find homeomorphs in 2-graphs roughly once we are able to find \emph{cycles}, i.e., homeomorphs of $S^1$, and our investigations suggest that we ought to be able to find homeomorphs in 3-graphs roughly once we are able to find \emph{spheres}, i.e., homeomorphs of $S^2$. 

The arguments of Brown, Erd\H{o}s and S\'os are however rather specific to the sphere and, slightly more generally, to `double-pyramidal' complexes. Consequently, even the specialisation of Conjecture~\ref{mainconj} to specific $3$-graphs leads to interesting questions; indeed, the special case of $H$ being (a triangulation of) the torus remains open, and has been reiterated by Linial~\citep{linialques1, linialques2} on multiple occasions as a natural starting point.

To put Theorem~\ref{thm:main} in context, another (cheap) argument is worth mentioning: for any 3-graph $H$, it is not difficult to construct a \emph{3-partite} 3-graph $\tilde H$ that is homeomorphic to $H$ (as shown in Figure~\ref{fig:subd}), and since finding a copy of $\tilde H$ as a \emph{subgraph} is a degenerate Tur\'an problem, it follows from a classical result of Erd\H{o}s~\citep{degen} that there is an $\eps_H > 0$ such that any $n$-vertex 3-graph with at least $n^{3-\eps_H}$ edges contains a copy of $\tilde H$ as a subgraph, and hence a homeomorphic copy of $H$. In contrast, Theorem~\ref{thm:main} says that this $H$-specific exponent $\eps_H$ may actually be replaced by a universal exponent of $1/5$. 

The level of generality at which Theorem~\ref{thm:main} applies comes at a price, however: for a few specific 3-graphs of interest, such as the sphere and the torus for example, the aforementioned arguments (i.e., that of Brown--Erd\H{o}s--S\'os, and the one based on the degenerate 3-graph Tur\'an problem) yield better estimates than what is promised by Theorem~\ref{thm:main}.

\begin{figure}
	\centering
	\begin{tikzpicture}[scale=5, every node/.style={scale=1}]
	\draw[fill,gray,opacity=0.1] (0,0)--(1,0)--(1/2,0.866)--(0,0);
	\draw (0,0)--(1,0)--(1/2,0.866)--(0,0);
	\draw (0.5,-0.15) node[] {$ H$};
	\draw (2.5,-0.15) node[] {$ \tilde H$};

	\draw[fill,gray,opacity=0.1] (2,0)--(3,0)--(2.5,0.866)--(2,0);
	\draw (2,0)--(3,0)--(2.5,0.866)--(2,0);
	\draw (0,0) node[] {$\bullet$};
	\draw (1,0) node[] {$\bullet$};
	\draw (1/2,0.866) node[] {$\bullet$};
	
	\draw (2.25,0.866/2)--(2.75,0.866/2)--(2.5,0)--(2.25,0.866/2);
	\draw (2.25,0.866/2)--(2.5,0.289)--(2.75,0.866/2);
	\draw (2.5,0.289)--(2.5,0);

	\draw (2,0)--(2.5,0.289);
	\draw (3,0)--(2.5,0.289);
	\draw (2.5,0.866)--(2.5,0.289);
	
	\draw (2.25,0.866/2) node[blue] {$\bullet$};
	\draw (2.75,0.866/2) node[blue] {$\bullet$};
	\draw (2.5,0) node[blue] {$\bullet$};
	
	\draw (2.375,0.2165) node[green] {$\bullet$};
	\draw (2.625,0.2165) node[green] {$\bullet$};
	\draw (2.5,0.866/2) node[green] {$\bullet$};
	
	\draw (2,0) node[red] {$\bullet$};
	\draw (3,0) node[red] {$\bullet$};
	\draw (2.5,0.866) node[red] {$\bullet$};
	\draw (2.5,0.289) node[red] {$\bullet$};

	\draw[->,  line width=0.3mm] (1.2,0.433) -- (1.8,0.433);
	\end{tikzpicture}
	\caption{Each edge of $H$ maps to twelve new edges in $\tilde H$; the colours red, blue and green describe the tripartition of $\tilde H$.}\label{fig:subd}
\end{figure}
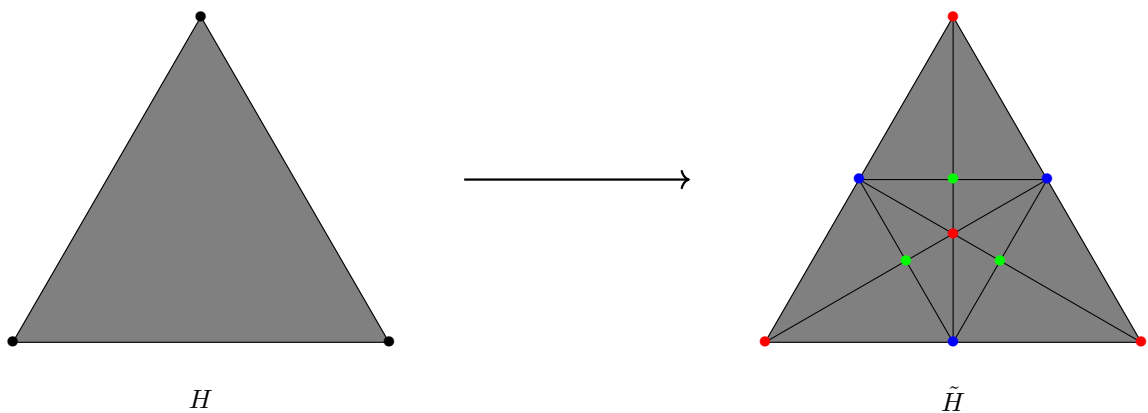

This paper is organised as follows. We begin with some definitions and establish some of the basic notions we need for the proof of our main result in Section~\ref{sec:prelim}. The proof of Theorem~\ref{thm:main} then follows in Section~\ref{sec:proof}. We conclude with a discussion about the limits of our approach, as well as some open problems, in Section~\ref{sec:conc}.

\section{Preliminaries}\label{sec:prelim}
Our notation is for the most part standard. Given a $2$-graph or a $3$-graph $G$, we write $v(G)$ and $e(G)$ for the number of vertices and edges of $G$ respectively. For a set $S$ of vertices in a 2-graph $G$, we write $\Gamma(S)$ for the set of common neighbours of $S$ in $G$, and following a common abuse, we write $\Gamma(x)$ for $\Gamma(\{x\})$, $\Gamma(x,y)$ for $\Gamma(\{x,y\})$, and so on; in the sequel, whenever we refer to $\Gamma(\cdot)$, the underlying graph will always be clear from the context, so there should be no cause for confusion. In those arguments that will involve working with both 2-graphs and 3-graphs in close proximity, we shall refer to the edges of 3-graphs as \emph{faces} to avoid confusion. Finally, in what follows, pairs and triples refer respectively to unordered two-element and three-element sets; again, we abuse notation slightly and abbreviate a pair $\{x,y\}$ as $xy$, a triple $\{x, y, z\}$ as $xyz$, and so on. 

It will be convenient to work with 3-partite 3-graphs; the following fact facilitates this, and follows from an easy averaging argument.

\begin{proposition}\label{triedges} Any 3-graph on $3n$ vertices with $m$ edges contains a 3-partite subgraph with vertex classes of size $n$ and at least $2m/9$ edges. \qed
\end{proposition}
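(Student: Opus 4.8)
The plan is to use a first-moment (averaging) argument. Write $V$ for the vertex set of the given $3$-graph $G$, with $\abs{V} = 3n$ and $e(G) = m$. I would sample a partition $V = V_1 \cup V_2 \cup V_3$ into three classes of size exactly $n$ uniformly at random, and then let $G'$ be the spanning subgraph of $G$ whose edges are precisely those edges of $G$ meeting all three classes; this $G'$ is automatically $3$-partite with classes of size $n$, so it suffices to show that $\E[e(G')] \ge 2m/9$.

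For a fixed edge $e = \{x,y,z\}$ of $G$, I would compute the probability that $x,y,z$ land in three distinct classes directly: there are $3!$ ways to assign the three vertices to the three classes, and each such assignment occurs with probability $\tfrac{n}{3n} \cdot \tfrac{n}{3n-1} \cdot \tfrac{n}{3n-2}$, so
\[
\Prob[e \in G'] = \frac{6n^3}{3n(3n-1)(3n-2)} = \frac{2n^2}{(3n-1)(3n-2)} \ge \frac{2n^2}{9n^2} = \frac29,
\]
where the inequality uses $(3n-1)(3n-2) = 9n^2 - 9n + 2 \le 9n^2$. Summing this over the $m$ edges of $G$ and applying linearity of expectation gives $\E[e(G')] \ge 2m/9$, and hence some outcome of the random partition realises a $3$-partite subgraph with vertex classes of size $n$ and at least $2m/9$ edges.

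I do not anticipate any genuine obstacle here: the whole argument is a one-line expectation computation, the only mild point being that one should sample a \emph{balanced} partition (rather than colour the vertices independently and uniformly, which would not guarantee classes of the prescribed size $n$) and then do the resulting hypergeometric count. Note in fact that the probability above is strictly larger than $2/9$ for every $n \ge 1$, so the stated bound even carries a small amount of slack.
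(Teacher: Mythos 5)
Your proof is correct, and it is exactly the ``easy averaging argument'' the paper alludes to (the paper omits the details, marking the proposition with a \qed): a uniformly random balanced partition makes each edge rainbow with probability $2n^2/((3n-1)(3n-2)) \ge 2/9$, and linearity of expectation finishes the job. Nothing further is needed.
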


Now, let $H$ be a fixed 3-graph and let $G$ be a 3-partite 3-graph whose three vertex classes $X$, $Y$ and $Z$ are each of size $n$. Our strategy to construct a homeomorph of $H$ in $G$ will involve gluing various building blocks together appropriately; below, we introduce the notions we require to execute this strategy.

First, we shall construct an \emph{auxiliary 2-graph $\CS(H)$} from $H$ that will be helpful in finding a homeomorphic copy of $H$ in $G$. The construction of $\CS(H)$ from $H$, illustrated in Figure~\ref{fig:S(H)}, is as follows: first, for each pair $xy$ that is contained in some face of $H$, we introduce a new vertex $u = u_{xy}$ in $\CS(H)$ and add the edges $xu$ and $yu$ to $\CS(H)$; then, for each face $xyz$ of $H$, we introduce a new vertex $u = u_{xyz}$ in $\CS(H)$ and add the edges $xu$, $yu$ and $zu$ to $\CS(H)$. 

We record a few facts about $\CS(H)$ below.
\begin{enumerate}
\item Each face $xyz$ of $H$ gives rise to three specific 4-cycles in $\CS(H)$, namely the 4-cycles $\{x,u_{xy},y,u_{xyz}\}$, $\{y,u_{yz},z,u_{xyz}\}$ and $\{z,u_{zx},x,u_{xyz}\}$; we call the 4-cycles of this form the \emph{special} 4-cycles of $\CS(H)$.
\item $\CS(H)$ is bipartite, with the set $V_1$ of the original vertices of $H$ and the set $V_2$ of the new vertices added in the construction of $\CS(H)$ forming a bipartition. 
\item The degree of any vertex of $\CS(H)$ in $V_2$ is at most 3.
\end{enumerate}

\begin{figure}
	\centering
	\begin{tikzpicture}[scale=5, every node/.style={scale=1}]
	\draw[fill,gray,opacity=0.2] (0,0)--(1,0)--(1/2,0.866)--(0,0);
	\draw (0,0)--(1,0)--(1/2,0.866)--(0,0);
	\draw (2,0)--(3,0)--(2.5,0.866)--(2,0);
	\draw (0,0) node[] {$\bullet$};
	\draw (1,0) node[] {$\bullet$};
	\draw (1/2,0.866) node[] {$\bullet$};
	\draw (2,0) node[red] {$\bullet$};
	\draw (3,0) node[red] {$\bullet$};
	\draw (2.5,0.866) node[red] {$\bullet$};
	
	\draw (2.25,0.866/2) node[blue] {$\bullet$};
	\draw (2.75,0.866/2) node[blue] {$\bullet$};
	\draw (2.5,0) node[blue] {$\bullet$};
	
	\draw (2.5,0.289) node[blue] {$\bullet$};
	
	\draw (2,0)--(2.5,0.289);
	\draw (3,0)--(2.5,0.289);
	\draw (2.5,0.866)--(2.5,0.289);
	
	\draw[->,  line width=0.3mm] (1.2,0.433) -- (1.8,0.433);
	\draw (0.5,-0.15) node[] {$ H$};
	\draw (2.5,-0.15) node[] {$ \CS(H)$};

	\end{tikzpicture}
	\caption{The construction of $\CS(H)$ from $H$; the colours red and blue describe the bipartition of $\CS(H)$.}\label{fig:S(H)}
\end{figure}
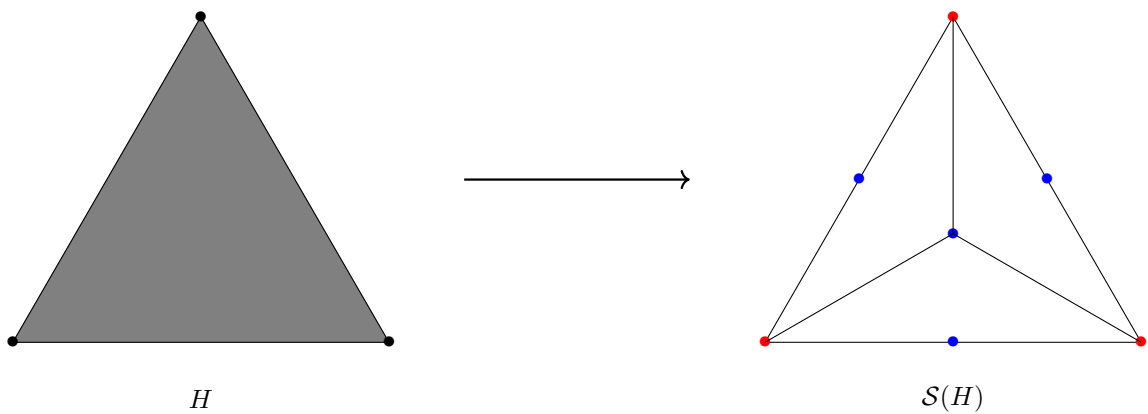

Next, we describe the structures within $G$ that will serve as building blocks in constructing a homeomorph of $H$. The \emph{link graph $\CL_z$} of a vertex $z\in Z$ is the bipartite graph between $X$ and $Y$ whose edges are those pairs $xy$ for which $xyz$ is a face of $G$. Notice that a 4-cycle in the link graph $\CL_z$ corresponds to four faces of $G$ (all sharing the vertex $z$) that, taken together, are homeomorphic to a disk; we call such a collection of four faces of $G$ a \emph{4-disk} with \emph{centre z}, and call the associated 4-cycle in the link graph $\CL_z$ the \emph{boundary} of the 4-disk.

Notice that a fixed 4-cycle in the complete bipartite graph between $X$ and $Y$ may be the boundary of anywhere between $0$ and $n$ different 4-disks in $G$. We set $K_H=3v(H)^3$, and call a 4-cycle between $X$ and $Y$ 
\begin{enumerate}
\item \emph{$H$-admissible} if the cycle is the boundary of more than $K_H$ different $4$-disks in $G$, and
\item \emph{$H$-forbidden} if this cycle is the boundary of between $0$ and $K_H$ different $4$-disks in $G$.
\end{enumerate}
The definitions of admissible and forbidden 4-cycles are motivated by the following observation. As noted earlier, each face of $H$ corresponds to three special 4-cycles in $\CS(H)$, and if we manage to find a copy of $\CS(H)$ in $X \times Y$ with the property that all its 4-cycles form boundaries of 4-disks in $G$ with distinct centres, then we may glue the corresponding 4-disks together to obtain a homeomorph of $H$ in $G$. 

We shall rely on $H$-admissible 4-cycles between $X$ and $Y$ to build a homeomorph of $H$ in $G$. First, assuming $G$ has sufficiently many edges, we shall show that we may pass to a subgraph $G'$ of $G$ in which most of the 4-cycles in $X \times Y$ are $H$-admissible. Next, we shall show, using $G'$, that we may find a copy of  $\CS(H)$ between $X$ and $Y$ with the property that each of the $4$-cycles in this copy is $H$-admissible. Finally, since an $H$-admissible 4-cycle is contained in at least $3v(H)^3 \ge 3e(H)$ different link graphs, we will be able to ensure that we never re-use central vertices when gluing the appropriate 4-disks in $G'$ together to construct a homeomorph of $H$. 

\section{Proof of the main result}\label{sec:proof}

As before, let $H$ be a fixed $3$-graph, take $K_H=3v(H)^3$, and let $G$ be a 3-partite 3-graph whose three vertex classes $X$, $Y$ and $Z$ are each of size $n$.

Our first goal is to find a vertex $z \in Z$ whose link graph $\CL_z$ is sufficiently dense so as to contain many copies of the auxiliary 2-graph $\CS(H)$ defined in Section~\ref{sec:prelim}, and which has a small number of $H$-forbidden 4-cycles. In order to achieve this, we use a straightforward application of dependent random choice; with the set-up as above, we have the following claim.

\begin{lemma}\label{lem:pickz}
If $e(G) \ge Cn^{3-\delta}$, then there exists a vertex $z\in Z$ such that
\begin{enumerate}
\item $e(\CL_z) \ge (C/2)n^{2-\delta}$, and 
\item the number of $H$-forbidden 4-cycles in $\CL_z$ is at most $(2K_H/C)n^{1+\delta}e(\CL_z)$.
\end{enumerate}
\end{lemma}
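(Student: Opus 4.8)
The plan is to choose $z$ by a weighted averaging argument — selecting $z\in Z$ with probability proportional to $e(\CL_z)$ — and then check that both conclusions hold with positive probability. The conceptual crux is the following reinterpretation of forbidden cycles: for a $4$-cycle $F$ in the complete bipartite graph between $X$ and $Y$, write $c(F)$ for the number of vertices $z\in Z$ such that all four edges of $F$ lie in $\CL_z$. A $4$-disk with boundary $F$ is determined by its centre together with $F$, and its centre is exactly such a $z$; hence $c(F)$ is precisely the number of $4$-disks in $G$ with boundary $F$, so $F$ is $H$-forbidden if and only if $c(F)\le K_H$.

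With this in hand, the first step is a global count of forbidden cycles. Summing the number of $H$-forbidden $4$-cycles contained in $\CL_z$ over all $z\in Z$ counts each $H$-forbidden cycle $F$ once for every link graph containing all four of its edges, i.e.\ exactly $c(F)$ times; so
\[
\sum_{z\in Z} \bigl(\#\,H\text{-forbidden }4\text{-cycles in }\CL_z\bigr)
= \sum_{F\,:\,1\le c(F)\le K_H} c(F)
\le K_H\binom{n}{2}^2
\le \frac{K_H}{4}\,n^4,
\]
since there are at most $\binom n2^2$ $4$-cycles between $X$ and $Y$ altogether. The second (routine) ingredient is $\sum_{z\in Z} e(\CL_z)=e(G)\ge Cn^{3-\delta}$.

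Now pick $z$ with $\Prob[z]=e(\CL_z)/e(G)$. The probability that (1) fails is $\sum_{z:\,e(\CL_z)<(C/2)n^{2-\delta}} e(\CL_z)/e(G) < \tfrac{(C/2)n^{2-\delta}\cdot n}{Cn^{3-\delta}}=\tfrac12$. Writing $f(z)$ for the number of $H$-forbidden $4$-cycles in $\CL_z$, the displayed bound gives $\E\!\left[f(z)/e(\CL_z)\right]=\tfrac{1}{e(G)}\sum_z f(z)\le \tfrac{K_H n^4/4}{Cn^{3-\delta}}=\tfrac{K_H}{4C}\,n^{1+\delta}$, so by Markov's inequality the probability that $f(z)/e(\CL_z)>(2K_H/C)n^{1+\delta}$ is at most $\tfrac18$. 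A union bound ($\tfrac12+\tfrac18<1$) produces a $z\in Z$ satisfying both (1) and (2). I do not expect any serious obstacle here: the only point requiring care is that (1) and (2) must be arranged \emph{simultaneously}, which is exactly why the choice of $z$ is weighted by $e(\CL_z)$ rather than uniform; the identification of $c(F)$ with the number of $4$-disks bounded by $F$ is what makes the global count of forbidden cycles available in the first place.
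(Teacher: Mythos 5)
Your proof is correct and rests on the same key observation as the paper's, namely that each $H$-forbidden $4$-cycle is the boundary of at most $K_H$ $4$-disks and hence lies in at most $K_H$ of the link graphs $\CL_z$, combined with an averaging argument over $z\in Z$. The only difference is cosmetic: you sample $z$ with probability proportional to $e(\CL_z)$ and finish with Markov's inequality plus a union bound, whereas the paper samples $z$ uniformly and extracts both conclusions from a single linear combination of the expectations of $e(\CL_z)$ and of the forbidden-cycle count; the underlying counting is identical.
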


\begin{proof}
	Select a vertex $z\in Z$ uniformly at random. It is clear that $\E[e(\CL_z)] = e(G)/n$. Note that the probability that any given $H$-forbidden 4-cycle is contained in $\CL_z$ is at most $K_H/n$. Therefore, writing $B_z$ for the number of $H$-forbidden 4-cycles in $\CL_z$, we have $\E[B_z] \le K_Hn^{3}$.
	
Putting the two estimates above together, we have
\[\E\left[ e(\CL_z) -(C/2) n^{2-\delta} -(C/2K_H)n^{-1-\delta}B_z)\right]\ge 0,\]
so there must exist a vertex $z \in Z$ for which we both have 
\[ e(\CL_z) -(C/2)n^{2-\delta}\ge 0\]
and
\[ e(\CL_z) -(C/2K_H)n^{-1-\delta}B_z\ge 0,\]
proving the claim.
\end{proof}

Our proof of Theorem~\ref{thm:main} will hinge around finding a copy of the auxiliary 2-graph $\CS(H)$ within the link graph $\CL_z$ promised by Lemma~\ref{lem:pickz} while avoiding $H$-forbidden 4-cycles. To find this copy, we first show that we can pass to a large subset $Y'\subset Y$ within which almost all pairs and triples are well-behaved. To quantify what it means to be well-behaved, we make the following definitions.
\begin{enumerate}
\item We call a pair $y_1y_2$ of vertices in $Y$ \emph{good} if 
\[|\Gamma(y_1,y_2)|\ge n^{1-2\eps}\] 
and there are at most 
\[(K_H/C)n^{1-3\eps}|\Gamma(y_1,y_2)|\] 
$H$-forbidden 4-cycles containing both $y_1$ and $y_2$, and \emph{bad} otherwise.
\item We call a triple $y_1y_2y_3$ of vertices in $Y$ \emph{good} if 
\[|\Gamma(y_1, y_2, y_3)| \ge n^{1-3\eps},\]
and \emph{bad} otherwise.
\end{enumerate} 
With this set-up, we next show the following.

\begin{lemma}\label{lem:pickx}
Let $\eps\le 1/5$, $C \ge 1$ and let $\CL_z$ be a bipartite graph between $X$ and $Y$ with $(C/2)n^{2-\eps}$ edges in which the number of $H$-forbidden 4-cycles is at most $K_Hn^{3+1/5-\eps}$. Then there is a subset $Y'$ of $Y$ of size at least $n^{1-\eps}/4$ within which
\begin{enumerate} 
\item at most $(400/C)\binom{|Y'|}{2}$ pairs are bad, and
\item at most $(600/C)\binom{|Y'|}{3}$ triples are bad.
\end{enumerate}
\end{lemma}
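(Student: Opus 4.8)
The plan is to dispose first of the easy case $C \le 400$: here we may simply take $Y' = Y$, since $|Y| = n \ge n^{1-\eps}/4$ and, because $400/C \ge 1$ and $600/C \ge 1$, the number of bad pairs in $Y$ is at most $\binom n2 \le (400/C)\binom n2$ while the number of bad triples is at most $\binom n3 \le (600/C)\binom n3$. So assume henceforth that $C > 400$; since $e(\CL_z) = (C/2) n^{2-\eps} \le n^2$, this forces $n \ge (C/2)^{1/\eps} \ge 200^5$, so in particular $n$ is as large as we need. The plan is then to pick a vertex $x \in X$ uniformly at random and set $Y' := \Gamma(x)$, the neighbourhood of $x$ in $\CL_z$; the point of this choice is that a pair $\{y_1,y_2\} \subseteq Y$ lies inside $Y'$ exactly when $x \in \Gamma(y_1,y_2)$, and a triple $\{y_1,y_2,y_3\} \subseteq Y$ lies inside $Y'$ exactly when $x \in \Gamma(y_1,y_2,y_3)$, so (as $|X| = n$) these events have probabilities precisely $|\Gamma(y_1,y_2)|/n$ and $|\Gamma(y_1,y_2,y_3)|/n$. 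Writing $m := |Y'|$, and $P$, $Q$ for the numbers of bad pairs and bad triples contained in $Y'$, the first job is to establish the first-moment estimates
\[
\E[m] = \frac{e(\CL_z)}{n} = \frac{C}{2}\, n^{1-\eps}, \qquad
\E[P] \le \tfrac{3}{2} C\, n^{2-2\eps}, \qquad
\E[Q] \le \tfrac16\, n^{3-3\eps} .
\]

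The estimate for $\E[Q] = \tfrac1n \sum |\Gamma(y_1,y_2,y_3)|$, the sum over all bad triples, is immediate: there are fewer than $n^3/6$ triples, and a bad one has $|\Gamma(y_1,y_2,y_3)| < n^{1-3\eps}$. For $\E[P] = \tfrac1n \sum |\Gamma(y_1,y_2)|$, the sum over bad pairs, I would bound the two types of bad pair separately. Pairs with $|\Gamma(y_1,y_2)| < n^{1-2\eps}$ contribute at most $\binom n2 \cdot n^{-2\eps} \le \tfrac12 n^{2-2\eps}$. For a pair carrying more than $(K_H/C) n^{1-3\eps} |\Gamma(y_1,y_2)|$ forbidden $4$-cycles through both of its vertices, write $F(y_1,y_2)$ for that number, so that $|\Gamma(y_1,y_2)| < (C/K_H) n^{3\eps-1} F(y_1,y_2)$; since every $4$-cycle between $X$ and $Y$ has exactly two vertices in $Y$, the sum of $F(y_1,y_2)$ over all pairs is just the total number of forbidden $4$-cycles, which is at most $K_H n^{3+1/5-\eps}$ by hypothesis, so these pairs contribute at most $\tfrac1n (C/K_H) n^{3\eps-1} \cdot K_H n^{3+1/5-\eps} = C n^{6/5 + 2\eps}$. (Note that $K_H$ cancels, so its precise value plays no role.) The hypothesis $\eps \le 1/5$ is exactly what makes $6/5 + 2\eps \le 2 - 2\eps$, so this contribution is at most $C n^{2-2\eps}$; adding the two contributions and using $C \ge 1$ gives $\E[P] \le (C + \tfrac12) n^{2-2\eps} \le \tfrac32 C n^{2-2\eps}$.

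The one genuine subtlety is that the target bounds on $P$ and $Q$ are stated \emph{relative to} $\binom m2$ and $\binom m3$, and $m$ is not a priori concentrated, so one cannot simply apply Markov's inequality to each of $m$, $P$, $Q$ and take a union bound. I would get around this by folding all three requirements into one quantity that is \emph{linear} in $m$, $P$ and $Q$, and hence controllable by linearity of expectation: set
\[
\Phi := m - \tfrac{1}{40}\, n^{\eps-1} P - C\, n^{2\eps-2} Q .
\]
The three estimates above give $\E[\Phi] \ge \big( \tfrac12 - \tfrac{3}{80} - \tfrac16 \big) C n^{1-\eps} > \tfrac14 C n^{1-\eps}$, so we may fix an $x$ with $\Phi \ge \E[\Phi]$. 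Since $P, Q \ge 0$, this forces $m \ge \Phi \ge \tfrac14 C n^{1-\eps} \ge \tfrac14 n^{1-\eps}$, which is the required size bound; it also gives $\tfrac1{40} n^{\eps-1} P \le m$ and $C n^{2\eps-2} Q \le m$, into which we substitute the inequality $n^{1-\eps} \le 4m/C$ just obtained to deduce $P \le 40 n^{1-\eps} m \le 160 m^2/C$ and $Q \le \tfrac1C n^{2-2\eps} m \le 16 m^3/C^3 \le 16 m^3/C$. Because $m \ge \tfrac14 n^{1-\eps}$ is large, $\binom m2 \ge 0.49\, m^2$ and $\binom m3 \ge 0.16\, m^3$, and therefore $P \le 160 m^2/C \le (400/C)\binom m2$ and $Q \le 16 m^3/C \le (600/C)\binom m3$, which completes the argument. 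The main obstacle is precisely this need to couple the three conclusions; once one commits to a single linear potential, what remains is the routine matter of choosing the two coefficients — here $\tfrac1{40}$ and $C$ — so that $\E[\Phi]$ stays a positive multiple of $C n^{1-\eps}$ while the induced bounds on $P/m^2$ and $Q/m^3$ come out below $200/C$ and $100/C$.
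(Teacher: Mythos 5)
Your proposal is correct and follows essentially the same route as the paper: choose $x \in X$ uniformly at random, take $Y' = \Gamma(x)$, bound the expected number of surviving bad pairs (split according to the two causes of badness, using the global bound on $H$-forbidden $4$-cycles) and bad triples, and then combine everything through a single linear potential in $|\Gamma(x)|$, $P$ and $Q$. The only differences are cosmetic — your explicit treatment of the case $C \le 400$ and the derived lower bound on $n$ neatly remove the ``$n$ sufficiently large'' caveat that the paper's version of the argument needs, but the underlying method is identical.
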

\begin{proof}
To prove the lemma, we start by selecting a vertex $x\in X$ uniformly at random. Clearly, we have 
\[\E[|\Gamma(x)|] = (C/2)n^{1-\eps}.\] 

Notice that if a pair $y_1y_2$ in $Y$ is bad, then either
\begin{enumerate}[(a)]
\item\label{b1} $|\Gamma(y_1,y_2)| < n^{1-2\eps}$, or
\item\label{b2} the number of $H$-forbidden 4-cycles through $y_1$ and $y_2$ is at least $(K_H/C)n^{1-3\eps}|\Gamma(y_1,y_2)|$,
\end{enumerate}
or possibly both.

First, given a bad pair $y_1y_2$ in $Y$ for which~\ref{b1} holds, since $|\Gamma(y_1,y_2)| < n^{1-2\eps}$, the probability that both $y_1$ and $y_2$ belong to $\Gamma(x)$ is at most $n^{-2\eps}$; hence, the number $P_1$ of such pairs surviving in $\Gamma(x)$ satisfies 
\[\E[P_1] \le n^{2-2\eps}.\]

Next, let $\mathcal{Q}$ denote the set of bad pairs $y_1y_2$ for which~\ref{b2} holds, so each pair $y_1y_2 \in \mathcal{Q}$ lies in at least $(K_H/C)n^{1-3\eps}|\Gamma(y_1,y_2)|$ $H$-forbidden 4-cycles. Since the total number of $H$-forbidden 4-cycles in $\CL_z$ is at most $K_Hn^{3+1/5-\eps}$, we get
	\[\sum_{y_1y_2\in \mathcal{Q}} (K_H/C)n^{1-3\eps}|\Gamma(y_1,y_2)|\le K_Hn^{3+1/5-\eps},\]
	which implies that
	\[\sum_{y_1y_2\in \mathcal{Q}}|\Gamma(y_1,y_2)|\le Cn^{2+1/5+2\eps}.\]
It follows that the number $P_2$ of pairs in $\mathcal{Q}$ surviving in $\Gamma(x)$ satisfies
	\[ \E[P_2] = \sum_{y_1y_2\in \mathcal{Q}} \frac{|\Gamma(y_1,y_2)|}{ n} \le Cn^{1+1/5+2\eps}.\]

Thus, the total number $P_x$ of bad pairs surviving in $\Gamma(x)$, which is clearly at most the sum $P_1+P_2$, satisfies 
	\[\E[P_x] \le \E[P_1] + \E[P_2] \le Cn^{1+1/5+2\eps}+n^{2-2\eps}\le (1+C)n^{2-2\eps};\]
	here, the last inequality relies on the fact that $\eps \le 1/5$.

	Finally, given a bad triple $y_1y_2y_3$ in $Y$, since $|\Gamma(y_1,y_2,y_3)| < n^{1-3\eps}$, the probability that this triple survives in $\Gamma(x)$ is at most $n^{-3\eps}$. Writing $T_x$ for the number of bad triples surviving in $\Gamma(x)$, we again have
	\[\E[T_x] \le n^{3-3\eps}. \]

Putting the above estimates together, we get
	\[ \mathbb{E}\left[|\Gamma(x)|- \frac{Cn^{1-\eps}}{4}-\frac{CP_x}{12(1+C)n^{1-\eps}}-\frac{CT_x}{6n^{2-2\eps}}\right]\ge0,\]
	which in particular implies that there is some $x \in X$ for which we have
	\begin{enumerate}[(A)]
	\item\label{i1} $|\Gamma(x)|\ge Cn^{1-\eps} / 4$,
	\item\label{i2} $|\Gamma(x)|\ge CP_x/12(1+C)n^{1-\eps}$, and
	\item\label{i3} $|\Gamma(x)|\ge CT_x/6n^{2-2\eps}$.
	\end{enumerate}
Multiplying the inequality in~\ref{i2} by the one in~\ref{i1}, the inequality in~\ref{i3} by the square of the one in~\ref{i1}, we see that for this choice of $x \in X$, we have
	\[ P_x\le \left(48(1+C)/C^2\right)|\Gamma(x)|^2 \le (400/C)\binom{|\Gamma(x)|}{2},\]
	and
	\[ T_x\le \left(96/C^{3}\right)|\Gamma(x)|^3 < \left(600/C^{3}\right)\binom{|\Gamma(x)|}{3}, \]
provided $n$ is sufficiently large. Taking $Y'=\Gamma(x)$ for this choice of $x$ proves the claim.
\end{proof}

We are now ready to put these two lemmas together to prove our main result.

\begin{proof}[Proof of Theorem~\ref{thm:main}]
Our goal given is to find a homeomorph of a given 3-graph $H$ in any large 3-graph $G$ with sufficiently many faces. Appealing to Proposition~\ref{triedges}, we start by assuming that $G$ is a 3-partite 3-graph whose three vertex classes $X$, $Y$ and $Z$ are each of size $n$, and which has at least $Cn^{3-1/5}$ faces for some suitably large constant $C$ depending on $H$ alone. As described in Section~\ref{sec:prelim}, we shall work with the auxiliary 2-graph $\CS(H)$ to find a homeomorph of $H$ in $G$. 

First, we apply Lemma~\ref{lem:pickz} to $G$ with $\delta=1/5$. This gives us a vertex $z \in Z$ whose link graph $\CL_z$ contains $(C/2)n^{2-\eps}$ edges for some $\eps \le \delta = 1/5$ in which the number of $H$-forbidden 4-cycles is at most 
\[(2K_H/C)n^{1+1/5}e(\CL_z) = K_Hn^{3+1/5 - \eps}.\] 
This link graph satisfies the requirements of Lemma~\ref{lem:pickx}, so we apply the lemma to pass to a subset $Y' \subset Y$ within which most pairs and most triples are good.

Recall that $\CS(H)$ is bipartite and admits a bipartition $(V_1,V_2)$ where each vertex in $V_2$ has degree at most 3, and where $V_1$ is in fact the original set of vertices of $H$. 

We shall first embed the vertices of $V_1$ into $Y'$ in such a way that no embedded pair is bad and no embedded triple is bad. In order to show that this is possible, we note that the proportion of bad pairs in $Y'$ is at most $400/C$ and the proportion of bad triples in $Y'$ is at most $600/C^3$. We define a 3-graph $\CD(Y')$ on the vertex set $Y'$ whose edges are those that are potentially problematic for our embedding, i.e., those triples $y_1y_2y_3$ which are either bad, or for which one of the pairs $y_1y_2$, $y_2y_3$ or $y_1y_3$ is bad. The density of this 3-graph $\CD(Y')$ is at most $1200/C+600/C^{3}\le 2000/C$. 

Our goal now is to find a complete 3-graph on $|V_1|$ vertices in the complement of $\CD(Y')$, since the existence of such a subgraph enables us to inject $V_1$ into $Y'$ whilst avoiding all bad pairs and bad triples. A bound of de Caen~\citep{deCaen} shows that a copy of the complete 3-graph $K_t^{3}$ on $t$ vertices can be found in any $3$-graph on $n$ vertices of density at least \[1-\binom{t-1}{2}^{-1},\] 
provided $n$ is sufficiently large. Therefore, we may find our embedding (again, assuming that $n$ is sufficiently large) provided that 
\[2000/C\le \binom{|V_1|}{2}^{-1},\] 
which we may ensure by taking $C \ge 1000v(H)^2$.

It remains to find an embedding of the vertices of $V_2$ into $X$. For each vertex $u$ of degree 3 in $V_2$ that we need to embed into $X$, we have a choice of $n^{1-3\eps}$ vertices in the common neighbourhood of its three already-embedded neighbours from $V_1$; we choose its image from these candidates uniformly at random. Similarly, for each vertex $v$ of degree 2 in $V_2$, we choose its image uniformly at random from the $n^{1-2\eps}$ vertices in the common neighbourhood of its two already-embedded neighbours from $V_1$. 

The probability that this embedding is not proper, i.e., that some two vertices in $V_2$ get mapped to the same vertex in $X$, is at most $|V_2|^2 n^{3\eps -1} < 1/2$, provided $n$ is large (since $\eps \le 1/5$ and $|V_2| \le 10 v(H)^3$). 

We shall next show that for this embedding, the probability of some special $4$-cycle in $\CS(H)$ mapping to an $H$-forbidden $4$-cycle is also at most $1/2$. Since the number of special $4$-cycles in $\CS(H)$ is $3e(H)$, it suffices to show for each special 4-cycle $\CC$ in $\CS(H)$ that the probability of its image being $H$-forbidden is at most $1/(6e(H))$. Let $y_1$ and $y_2$ be the images of vertices of $\CC$ in $V_1$ (which have been fixed earlier deterministically), and consider $u'$ and $v'$, the (random) images of the two vertices $u$ and $v$ of $\CC$ from $V_2$ whose degrees in $\CS(H)$ are respectively 3 and 2. Let $y_3$ be the vertex in $Y'$ so that $u'$ is chosen uniformly at random from $\Gamma(y_1,y_2,y_3)$. Suppose for a contradiction that the probability of the image of $\CC$ being $H$-forbidden is at least $1/(6e(H))$. Then this implies that at least a $1/(6e(H))$ proportion of the 4-cycles formed by taking $y_1$ and $y_2$, together with a vertex $x_1\in \Gamma(y_1,y_2,y_3)$ and a vertex $x_2 \in \Gamma(y_1,y_2)$ are $H$-forbidden. This leads us to conclude that the number of $H$-forbidden 4-cycles through $y_1$ and $y_2$ is at least 
\[(1/6e(H))n^{1-3\eps}|\Gamma(y_1,y_2)|.\] However, if $1/(6e(H)) \ge K_H/C$, then this would imply that $y_1y_2$ is a bad pair, a contradiction; this final inequality may be ensured by taking $C \ge 18v(H)^6$, since $e(H)\le v(H)^3$ and $K_H=3v(H)^3$.
	
	We have shown that it is possible to embedded $\CS(H)$ into $\CL_z$ in such a way that all of the special 4-cycles in this embedding are $H$-admissible. This embedding extends to a homeomorph of $H$ inside $G$ as follows. For each 4-cycle $\CC$ in $\CL_z$ that is the image of some special 4-cycle of $\CS(H)$, we claim that we may choose a unique vertex $z(\CC) \in Z$ such that $\CC$ is also contained in the link graph $\CL_{z(\CC)}$: indeed, $\CC$ is $H$-admissible, so there are at least $K_H = 3v(H)^3\ge 3e(H)$ choices for $z(\CC)$. We then use $z(\CC)$ to turn each of the embedded special 4-cycles $\CC$ in $\CL_z$ into a 4-disk in $G$, noting that these 4-disks all have distinct centres; the result is a homeomorph of $H$ in $G$.
\end{proof}

\section{Conclusion}\label{sec:conc}
Below, we address some of the limitations of our approach to finding homeomorphs of a fixed target 3-graph $H$, as well as some potential avenues for improvement.

It seems plausible that the exponent of $3-1/5$ that we obtain may be improved somewhat by a more judicious application of the methods developed here. However, the ideas developed in this paper reach a bottleneck, conjecturally, at the exponent of $3-1/4$. This is because it is believed~\citep{Mubayi} that there exist $n$-vertex 3-graphs with $\Omega(n^{3-1/4})$ edges that do not contain any octahedra, though the best constructions presently known, see~\citep{katz}, only manage $\Omega(n^{3-1/3})$ edges. If a 3-graph does not contain any octahedra, then our approach based on $H$-admissible 4-cycles falls apart, since if all the 4-cycles in the link graphs are $H$-forbidden, then our method for extending $\CS(H)$ to a homeomorph of $H$ fails due to degeneracy concerns.

Another important fact to bear in mind is that while $\Omega(n^{5/2})$ edges guarantee a homeomorph of $S^2$ in any $n$-vertex 3-graph, the number of edges needed to guarantee a homeomorph of $S^2$ \emph{of bounded size} comes with an exponent strictly greater than $5/2$, as can be verified by a standard deletion argument applied to a (binomial) random 3-graph of the appropriate density. Our methods here end up finding bounded-size homeomorphs: indeed, we find a homeomorphic copy of $H$ that has $12e(H)$ edges. Any strategy that does not plan for the possibility of finding large homeomorphs of  $H$, i.e., of size unbounded in terms of $H$, cannot prove Conjecture~\ref{mainconj}.

%Take a random 3-graph $G$ with edge density $n^{\delta-1/2}$. Let $H$ be a given homeomorph of $S^2$ with $f$ triangular faces. By Euler's formula, the number $v$ of vertices of $H$ is given by $v=f/2+2$. Therefore the expected number of copies of $H$ in $G$ is at most $$\left(n^{\delta-1/2}\right)^fn^v=n^{f\delta+2}$$
%If, say, $\delta<1/(2f)$, then $n^{f\delta+2}=o(n^{5/2+\delta})$. Indeed, given some threshold number of faces $h$, the above argument shows that for $\delta$ sufficiently small in terms of $h$ the expected number of homeomorphs of $S^2$ with at most $h$ faces in the random graph $G$ with edge density $n^{\delta-1/2}$ is $o(n^{5/2+\delta}$. It follows that there exists a graph with $O(n^{5/2+\delta}$ edges and no homeomorphs of $S^2$ with at most $h$ faces, provided that $\delta=\delta(h)$ is sufficiently small.

%It is also possible that improvements could be made towards the exponent in Conjecture~\ref{mainconj} by considering longer cycles in the place of 4-cycles. It would then be possible to consider a more general class of constructions (in the spirit of $\CS(H)$) than what is used in our work, where one could consider decomposing the faces of the target 3-graph $H$ into longer even cycles. However, complications quickly arise in the analogous dependent random selection lemmas, and we have not been able to circumvent this tension.

We leave the reader with a reminder of the specialisation of Conjecture~\ref{mainconj} to the torus as reiterated by Linial~\citep{linialques1, linialques2}.
\begin{conjecture}
There is a $C >0$ such that any $3$-graph $G$ on $n$ vertices with at least $C n^{5/2}$ edges contains a homeomorph of the torus.
\end{conjecture}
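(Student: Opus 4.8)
Since, as the discussion above makes plain, this conjecture has so far resisted attack, what follows is a plan of attack rather than a proof. The natural starting point is to keep the architecture of the proof of Theorem~\ref{thm:main}: use Proposition~\ref{triedges} to pass to a $3$-partite host $G$ with parts $X$, $Y$, $Z$ of size $n$, locate a suitable auxiliary $2$-graph inside link graphs, and inflate each of its $4$-cycles into a $4$-disk. Two changes are forced on us by the remarks in Section~\ref{sec:conc}. First, because $\Theta(n^{5/2})$ faces cannot force a homeomorph of the torus of bounded size, the target triangulation must be allowed to grow with $n$; I would fix the quadrangulation $C_k\times C_k$ of the torus, namely the toroidal grid of $k^2$ square cells, with $k\to\infty$, and aim for the homeomorph obtained by filling each cell with a $4$-disk (so the target has $k^2$ grid vertices, $k^2$ cell-centres, and $4k^2$ faces). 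Second, and crucially, these $k^2$ $4$-disks must be distributed over $k^2$ \emph{different} link graphs $\CL_z$ rather than lying inside a single one: in that regime each cell-boundary $4$-cycle needs to bound only \emph{one} $4$-disk and carry only \emph{one} private centre, which is precisely what the proof of Theorem~\ref{thm:main} cannot do economically — it needs a single $\CL_z$ in which $4$-cycles bound $\Omega(v(H)^3)$ disks, and that is what costs the exponent $1/4$.

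Concretely, the plan has four steps. (i) Using a moment/convexity argument in the spirit of the Brown--Erd\H{o}s--S\'os treatment of $S^2$~\citep{bes} — recall that they show $Cn^{5/2}$ faces force two vertices $a,b$ whose link graphs share enough common edges to contain a cycle, and hence a bipyramid — extract from $Cn^{5/2}$ faces a dense auxiliary bipartite structure between $X$ and $Y$ recording which pairs have large codegree. (ii) Inside this structure locate a copy of the bipartite grid $C_k\times C_k$ on $X\cup Y$; since the grid has bounded degree, one wants here a sparse-graph embedding lemma (greedy, or a dependent-random-choice argument as in Lemma~\ref{lem:pickx}) that succeeds once the relevant density crosses $n^{1/2+o(1)}$. (iii) Assign to each of the $k^2$ cell-boundary $4$-cycles a private centre in $Z$, via a system-of-distinct-representatives/Hall argument, using that each cell $4$-cycle bounds at least one $4$-disk and that the embedding in (ii) can be chosen with some slack. (iv) Glue the $k^2$ $4$-disks along shared grid edges to obtain a homeomorph of the torus; the grid vertices, the centres, and the parts they occupy are automatically disjoint, so only the centres need the bookkeeping of (iii).

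The main obstacle lies squarely in steps (ii)--(iii), and it is this obstacle that makes the conjecture open. On the one hand, no single triangulation $T_k$ of the torus can be found merely as a \emph{subgraph} once $k$ is large: its degenerate Tur\'an exponent degrades to $0$, so for any fixed $k$ one has $\ex(n,T_k)\gg n^{5/2}$, and the argument must therefore genuinely exploit topological flexibility, finding whichever torus triangulation the host happens to be forced to contain — but, unlike the sphere, the torus is not a suspension of a cycle (it is not simply connected), and it is exactly this that deprives us of the clean one-dimensional counting underlying the Brown--Erd\H{o}s--S\'os argument. On the other hand, step (iii) runs into the degeneracy barrier flagged in Section~\ref{sec:conc}: a $4$-cycle bounds two $4$-disks with distinct centres precisely when the host contains an octahedron on it, so in an octahedron-free host — which a host with only $n^{5/2}\ll n^{3-1/4}$ faces may well be — every cell $4$-cycle bounds at most \emph{one} $4$-disk, and then (iii) demands that the $k^2\sim n$ cells receive $k^2$ \emph{distinct forced} centres, a near-perfect matching that the crude counting in (i) will not obviously leave room for. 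Overcoming this seems to require either a torus triangulation whose disks are arranged so that very few centres suffice — long, thin triangulated annuli stacked cyclically, say, rather than a square grid — or a genuinely new mechanism for producing a closed surface that sidesteps the $4$-disk step, for instance finding a quadrangulation of the torus directly in a carefully chosen shadow graph and triangulating only afterwards.
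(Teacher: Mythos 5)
You were asked about a statement that the paper itself leaves open: it is a conjecture (Linial's torus question, the specialisation of Conjecture~\ref{mainconj} recorded in Section~\ref{sec:conc}), and the paper offers no proof of it. Your submission, as you say yourself, is a plan of attack rather than a proof, so there is nothing here that can be verified: each of steps (i)--(iii) is an unproved assertion, and the two places you flag as obstacles are precisely where the conjecture lives. To name the gaps concretely: in step (ii) you must embed a quadrangulation with $k^2$ cells where $k$ grows with $n$ (you take $k^2\sim n$), and an embedding of a structure of polynomial size is far beyond what the dependent-random-choice arguments of Lemma~\ref{lem:pickz} and Lemma~\ref{lem:pickx} deliver; no substitute technique is proposed. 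In step (iii), Hall's condition for the system of distinct centres is not even formulated, let alone verified, and in a potentially octahedron-free host each cell $4$-cycle bounds at most one $4$-disk, so there is no slack for the matching argument to exploit. Worse, these two steps are in tension: when every centre is forced by its boundary $4$-cycle, the object you are really asking for is a \emph{specific} triangulation of the torus (the grid with every cell coned) appearing as an honest subgraph, and the topological flexibility you rightly say must be exploited --- finding ``whichever triangulation the host is forced to contain'' --- is nowhere reflected in a plan that fixes the grid family in advance.

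It is also worth measuring your plan against what the paper already records as known. The conclusion notes that an adaptation of the Brown--Erd\H{o}s--S\'os argument~\citep{bes} to triple-pyramidal complexes yields the exponent $3-1/3$ for the torus, which is currently the best bound; your outline, even if all four steps could be carried out, is not shown to recover this, and your step (i) invokes the Brown--Erd\H{o}s--S\'os counting only in the sphere setting where, as you correctly observe, the suspension structure does the work. So the submission is a reasonable and honest research sketch, consistent with the paper's own discussion of why bounded-size targets and octahedron-free hosts are the obstructions, but it is not a proof, and the missing ingredients --- an embedding mechanism for unbounded-size targets at density $n^{5/2}$ and a way to certify distinct centres without octahedra --- are exactly the open problems.
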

An easy adaptation of the arguments of Brown, Erd\H{o}s, and S\'os~\citep{bes} to triple-pyramidal complexes (from double-pyramidal complexes) shows that an exponent of $3-1/3$ suffices for the torus, but improving on this bound remains an attractive starting point to Conjecture~\ref{mainconj} in its full generality.
%There is an embedding of K_{3,3} into the torus where each of the three faces is a hexagon. To find such an embedding: 3-colour the edges of K_{3,3}. One hexagon is comprised of the edges coloured 1 or 2, one of 1,3 and one of 2,3. If we cone each of the three hexagons with an additional vertex we have a 2-complex that is homeomorphic to a torus. In a 3-uniform hypergraph with >> n^{8/3} faces, there are three vertices the intersection of whose links is a graph with any desired (constant) average degree. By picking this constant large enough, this common link must have a topological copy of K_{3,3}.

\section*{Acknowledgements}
The first and second authors were partially supported by ERC Consolidator Grant 647678, and the third author wishes to acknowledge support from NSF grant DMS-1800521.
\bibliographystyle{amsplain}
\bibliography{homeo_turan}

\providecommand{\bysame}{\leavevmode\hbox to3em{\hrulefill}\thinspace}
\providecommand{\MR}{\relax\ifhmode\unskip\space\fi MR }
% \MRhref is called by the amsart/book/proc definition of \MR.
\providecommand{\MRhref}[2]{%
  \href{http://www.ams.org/mathscinet-getitem?mr=#1}{#2}
}
\providecommand{\href}[2]{#2}
\begin{thebibliography}{10}

\bibitem{bes}
W.~G. Brown, P.~Erd\H{o}s, and V.~T. S\'os, \emph{On the existence of
  triangulated spheres in {$3$}-graphs, and related problems}, Period. Math.
  Hungar. \textbf{3} (1973), 221--228.

\bibitem{deCaen}
D.~de~Caen, \emph{Extension of a theorem of {M}oon and {M}oser on complete
  subgraphs}, Ars Combin. \textbf{16} (1983), 5--10.

\bibitem{degen}
P.~Erd\H{o}s, \emph{On extremal problems of graphs and generalized graphs},
  Israel J. Math. \textbf{2} (1964), 183--190.

\bibitem{spheres}
A.~Georgakopoulos, J.~Haslegrave, R.~Montgomery, and B.~Narayanan,
  \emph{Spanning surfaces in 3-graphs}, Preprint, arXiv:1808.06864.

\bibitem{tim2}
W.~T. Gowers, Personal communication.

\bibitem{katz}
N.~H. Katz, E.~Krop, and M.~Maggioni, \emph{Remarks on the box problem}, Math.
  Res. Lett. \textbf{9} (2002), 515--519.

\bibitem{turan2}
P.~Keevash, \emph{Hypergraph {T}ur\'{a}n problems}, Surveys in Combinatorics
  2011, London Math. Soc. Lecture Note Ser., vol. 392, Cambridge Univ. Press,
  Cambridge, 2011, pp.~83--139.

\bibitem{linialques1}
N.~Linial, \emph{What is high-dimensional combinatorics?}, Random--Approx
  (2008).

\bibitem{linialques2}
\bysame, \emph{Challenges of high-dimensional combinatorics}, Lov\'asz's
  Seventieth Birthday Conference (2018).

\bibitem{linial2}
N.~Linial and Z.~Luria, \emph{An upper bound on the number of high-dimensional
  permutations}, Combinatorica \textbf{34} (2014), 471--486.

\bibitem{linial4}
\bysame, \emph{Discrepancy of high-dimensional permutations}, Discrete Analysis
  (2016:11), 8 pp.

\bibitem{linial1}
N.~Linial and A.~Morgenstern, \emph{On high-dimensional acyclic tournaments},
  Discrete Comput. Geom. \textbf{50} (2013), 1085--1100.

\bibitem{linial3}
N.~Linial and Y.~Peled, \emph{On the phase transition in random simplicial
  complexes}, Ann. of Math. \textbf{184} (2016), 745--773.

\bibitem{threshold}
Z.~Luria and R.~J. Tessler, \emph{A sharp threshold for spanning 2-spheres in
  random 2-complexes}, Proc. Lond. Math. Soc. \textbf{119} (2019), 733--780.

\bibitem{mader1}
W.~Mader, \emph{Homomorphieeigenschaften und mittlere {K}antendichte von
  {G}raphen}, Math. Ann. \textbf{174} (1967), 265--268.

\bibitem{Mubayi}
D.~Mubayi, \emph{Some exact results and new asymptotics for hypergraph
  {T}ur\'{a}n numbers}, Combin. Probab. Comput. \textbf{11} (2002), 299--309.

\bibitem{turan1}
P.~Tur\'an, \emph{On an extremal problem in graph theory}, Matematikai \'es
  {F}izikai {L}apok, 436–452.

\end{thebibliography}
\end{document}